\documentclass[12pt, a4paper]{amsart}

\usepackage{amsmath}
\usepackage{amssymb}
\usepackage{amsthm}
\usepackage{amscd}
\usepackage{amsfonts}
\usepackage{amsbsy}
\usepackage{epsfig}
\usepackage{enumerate}
\usepackage{soul}
\usepackage[normalem]{ulem}
\usepackage[shortlabels]{enumitem}
\usepackage[latin1]{inputenc}
\usepackage{tikz}
\usepackage{hyperref}

\newtheorem{theorem}{Theorem}[section]
\newtheorem{lemma}[theorem]{Lemma}

\newtheorem{proposition}[theorem]{Proposition}
\newtheorem{corollary}[theorem]{Corollary}

\newtheorem{remark}[theorem]{Remark}

\newtheorem{definition}[theorem]{Definition}
\newtheorem{question}[theorem]{Question}
\newtheorem*{claim*}{Claim}

\newfont\bbf{msbm10 at 12pt}

\def\eps{\varepsilon}

\def\AC{{\hbox{{\rm AC}}}}
\def\VB{{\hbox{{\rm VB}}}}
\def\ACG{{\hbox{{\rm ACG}}}}
\def\VBG{{\hbox{{\rm VBG}}}}
\def\ap{\mbox{\tiny  ap}}

\def\id{{\hbox{{\rm id}}}}

\def\supp{\mbox{\rm supp}}

\def\inter{\mbox{\rm int} }

\DeclareMathOperator{\conv}{conv}

\def\le{\leqslant}
\def\ge{\geqslant}

\def\per{\mathrm{Per}}

\def\1{ {\hbox{{\it 1}} \!\! I} }

\def\CP{\overline{CP}}

\begin{document}

\title[On Besicovitch functions]
{On Lebesgue measure preserving Besicovitch functions}
\author{Jozef Bobok}
\author{Jernej \v Cin\v c}
\author{Piotr Oprocha}
\author{Serge Troubetzkoy}

\date{\today}

\address[J.\ Bobok]{Department of Mathematics of FCE, Czech Technical University in Prague,
Th\'akurova 7, 166 29 Prague 6, Czech Republic}
\email{jozef.bobok@cvut.cz}

\address[J.\ \v{C}in\v{c}]{University of Maribor, Koro\v ska 160, 2000 Maribor, Slovenia
 -- $\&$ -- 
	Abdus Salam International Centre for Theoretical Physics (ICTP), Trieste, Italy}
\email{jernej.cinc@um.si}

\address[P.\ Oprocha]{
Centre of Excellence IT4Innovations - Institute for Research and Applications of Fuzzy Modeling, University of Ostrava, 30. dubna 22, 701 03 Ostrava 1, Czech Republic}
\email{piotr.oprocha@osu.cz}

\address[S.\ Troubetzkoy]{Aix Marseille Univ, CNRS, I2M, Marseille, France}
\email{serge.troubetzkoy@univ-amu.fr}
	
\begin{abstract} We consider the space $C_{\lambda}$ of all continuous interval maps preserving the Lebesgue measure $\lambda$. A continuous function $f\colon~[0,1]\to \mathbb R$ is called Besicovitch if it does not have any finite or infinite unilateral derivative. It is known that the set of Besicovitch functions in $C_{\lambda}$ is nonempty and meager.  

We prove that no Besicovitch function is invertible $\lambda$-almost everywhere.  As a consequence, every Besicovitch function in $C_{\lambda}$ has positive measure-theoretic entropy with respect to $\lambda$.
 Furthermore, we show that Besicovitch functions are dense in $C_{\lambda}$ and, consequently, also dense in the class of interval maps with a dense set of periodic points.
\end{abstract}

\maketitle
\section{introduction}
We consider $C_\lambda$, the set of continuous maps of the interval $I := [0,1]$ which preserve the Lebesgue measure $\lambda$.
Except for the identity map and the reflection $1 - \id$ every map in $C_\lambda$ 
has topological entropy at least $\log2/2$ and generically they have infinite 
topological entropy \cite{BT}.
In the recent article \cite{BCOT} the authors have shown that the
generic map in $C_{\lambda}$ has zero measure-theoretic
entropy with respect to $\lambda$.
This result is quite striking, since the graphs of generic maps in 
$C_\lambda$ are highly irregular:
they have upper box dimension 2, iterated $\delta$-crookedness property and have knot points $\lambda$-a.e.\ \cite{BCOT1}. 
It is therefore natural to ask whether there exists any relation between the 
irregular structure of the graph of a map and its measure-theoretic entropy with 
respect to $\lambda$.
The results of \cite{BCOT} suggest that such a connection, if it exists, is far 
from straightforward, and is generically even  counterintuitive.
This question forms both the starting point and the motivation of the present investigation.

One can attempt to address this question for different notions of irregular behavior of the graph of a map. 
In this article, we study it in the context of Besicovitch functions.
Our main result, Theorem \ref{t:3}, states that no Besicovitch function is invertible 
$\lambda$-a.e.; as a consequence, every Besicovitch function in 
$C_\lambda$ has positive measure-theoretic entropy with respect to $\lambda$ (Corollary \ref{c1}).
Besicovitch functions are continuous functions that are nowhere one-sided differentiable, neither finitely nor infinitely.  For an extensive exposition of Besicovitch functions we recommend the  book \cite[Ch. 11]{JP16}
(see also \cite{C}, \cite{H}).  A direct construction of a Besicovitch function 
in $C_{\lambda}$ was given by Bobok \cite{Bo90}, \cite{Bo04}, hence the set of such maps
in $C_\lambda$ is nonempty. As a complement to our main result
 we prove that the set of Besicovitch functions is dense in 
$C_{\lambda}$ (Proposition \ref{p:2}); as a consequence we obtain that the set of Besicovich functions is dense also in the set of all continuous interval maps having dense set of periodic points.
 Saks has shown that the collection of all Besicovitch functions is a meager set in $C(I)$ \cite{Sak32}; the same result also holds in $C_{\lambda}$ \cite[Section 3]{Bo90}. 
 In Corollary \ref{c2} we  give a new proof of this fact. We conclude the article with a list
 of related open questions.

\section{Basic definitions and known results}

We say that a point $x\in (0,1)$ is a {\em point of density of a measurable set $A\subset I$} if
$$\lim_{h\to 0_{+}}\frac{\lambda(A\cap [x-h,x+h])}{2h}=1$$
while we say that $0$, resp.\ $1$, is a point of density of $A$ if 
$$\lim_{h\to 0_{+}}\frac{\lambda(A\cap [0,h])}{h}=1, \quad \text{resp.} \quad \lim_{h\to 0_{+}}\frac{\lambda(A\cap [1-h,1])}{h}=1.$$
The set of all points of density of a measurable set $A\subset I$ will be denoted by $D(A)$.

\begin{lemma}\label{l:3}If $A\subset I$ is measurable then $D(A)$ is Borel.
\end{lemma}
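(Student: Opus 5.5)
The plan is to write $D(A)$ explicitly as a countable Boolean combination of closed sets. For $x\in(0,1)$ and $h>0$, put $\varphi(x,h)=\lambda(A\cap[x-h,x+h])$, and take $A$ measurable, so $\varphi$ is well defined. The key observation is that $\varphi$ is continuous, indeed Lipschitz in both variables: $|\varphi(x,h)-\varphi(y,k)|\le 2|x-y|+2|h-k|$. This holds because the symmetric difference of the intervals $[x-h,x+h]$ and $[y-k,y+k]$ has measure at most $2|x-y|+2|h-k|$. Hence, for each fixed $h$, the map $x\mapsto \varphi(x,h)/(2h)$ is continuous on $(0,1)$. The same is true of $h\mapsto\varphi(x,h)/(2h)$ for each fixed $x$, on $h>0$.

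Next, reduce the limit to rational $h$. We have $x\in D(A)\cap(0,1)$ iff for every $n\in\mathbb N$ there is $m\in\mathbb N$ such that
\[
\frac{\varphi(x,h)}{2h}\ge 1-\tfrac1n \quad\text{for all } h\in(0,\tfrac1m).
\]
The ratio is always at most $1$, so only the lower bound is needed. Because $h\mapsto \varphi(x,h)/(2h)$ is continuous, the condition over all real $h\in(0,1/m)$ is equivalent to the same condition over rational $h\in(0,1/m)$. Therefore
\[
D(A)\cap(0,1)=\bigcap_{n}\bigcup_{m}\bigcap_{h\in\mathbb Q\cap(0,1/m)}\Bigl\{x\in(0,1): \varphi(x,h)\ge 2h\bigl(1-\tfrac1n\bigr)\Bigr\}.
\]
The inner sets are relatively closed in $(0,1)$ by continuity of $x\mapsto\varphi(x,h)$. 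One caveat is that for $x$ near the endpoints and larger $h$ the interval $[x-h,x+h]$ leaves $I$. This does not matter, since $\lambda$ is restricted to $A\subset I$ and continuity is unaffected. The resulting set is a $G_{\delta\sigma\delta}$-type set, in particular Borel.

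The endpoints $0$ and $1$ form a finite set, so whether or not they belong to $D(A)$ does not affect Borel measurability. Adding or removing them keeps the set Borel. I do not expect a real obstacle. The only point needing care is the reduction to countably many $h$, which rests on continuity in $h$. Note that $h\mapsto\varphi(x,h)$ is monotone and Lipschitz, while $1/(2h)$ is continuous for $h>0$.
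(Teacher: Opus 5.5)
Your proof is correct and takes essentially the paper's route: $D(A)$ is written as $\bigcap_n\bigcup_\ell\bigcap_{\cdot}$ of sets cut out by the continuous function $x\mapsto\lambda(A\cap[x-h,x+h])$. The only differences are that the paper uses open sets (strict inequality) at the radii $h=1/m$, where passing from $h=1/m$ to all $h$ implicitly uses monotonicity of $h\mapsto\varphi(x,h)$, whereas you use closed sets over rational $h$ justified by continuity in $h$, and that you handle the endpoints $0,1$ separately (the paper's formula does not literally match the one-sided definition there, which is harmless for Borelness).
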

\begin{proof} Clearly
\begin{equation*}
D(A)=\bigcap_{n\ge 1}\bigcup_{\ell\ge 1}\bigcap_{m\ge\ell}A_{m,n},
\end{equation*}
where each $A_{m,n}=\{x\in I\colon~\frac{m}{2}\lambda(A\cap [x-\frac{1}{m},x+\frac{1}{m}])>1-\frac{1}{n}\}$ is open.
\end{proof}

\begin{remark}\label{rem:fullD}
For each 
measurable set $A\subset I$ one has $\lambda(A)=\lambda(D(A))$  \cite[Theorem 5.1]{AMB}.
\end{remark}

\begin{definition}Let $f\colon~I\to\mathbb R$ be measurable and $x_0\in (0,1)$. If there is a measurable set $A\subset I$ for which $x_0$ is a point of density and the limit
$$\lim_{x\to x_0,~ x\in A}\frac{f(x)-f(x_0)}{x-x_0}$$
exists, we will call it the {\em approximate derivative of $f$ at $x_0$} and denote it $f'_{ap}(x_0)$. We say $f$ is {\em approximately differentiable at $x_0$} if $f'_{ap}(x_0)\in\mathbb R$.
 \end{definition}

 Recall that \emph{Dini derivatives} are defined by $D^\pm f(x)=\limsup_{x\to (x_0)_{\pm}} \frac{f(x)-f(x_0)}{x-x_0}$ and 
$D_{\pm} f(x)=\liminf_{x\to (x_0)_{\pm}} \frac{f(x)-f(x_0)}{x-x_0}$. 
A \emph{knot point} of function $f\in C(I)$ is every $x\in I$ where Dini derivatives satisfy $D^+f(x) = D^-f(x) = \infty$ and $D_+f(x) = D_-f(x) = -\infty$. Note that knot point at $x$ is a special case of the situation when $f'(x)$ does not exist.

\begin{definition}Let $f\colon~\mathbb R\to \mathbb R$ and let $E\subset \mathbb{R}$. Then we write
\begin{equation*}V (f;E) = \sup\sum_{i=1}^n\vert f(d_i)-f(c_i)\vert,
\end{equation*}
where the supremum is computed using all choices of 
 finite collections of intervals with pairwise disjoint interiors
from the collection of all intervals $[c,d]$ with endpoints $c$ and $d$ in the set $E$.
\end{definition}

The following concepts and abbreviations follow Saks' terminology \cite[Ch. 7]{Sak37}.

\begin{definition}Let $f\colon~\mathbb R\to\mathbb R$ and let $E$ be a set of real numbers. We say that f  has $\VB$ ({\em bounded variation}) on $E$ provided that $V(f;E)<\infty$. We say that $f$  has $\VBG$ ({\em generalized bounded variation}) on $E$ provided there is a countable collection of sets $\{E_n\}$ covering $E$ so that f has $\VB$ on $E_n$ for each $n$.
\end{definition}

\begin{definition}Let $f\colon~\mathbb R\to\mathbb R$ and let $E$ be a set of real numbers. We say that $f$ is $\AC$ ({\em absolutely continuous}) on $E$  provided that for every $\varepsilon>0$ there exists $\eta>0$ such that for every sequence of  intervals $\{[c_i,d_i]\}$ with pairwise disjoint interiors whose endpoints belong to $E$, the inequality $\sum_i d_i-c_i<\eta$ implies $\sum_i\vert f(d_i)-f(c_i)\vert<\varepsilon$. We say that $f$ is $\ACG$ ({\em generalized absolutely continuous}) on $E$ provided there is a countable collection of sets
$\{E_n\}$ covering $E$ so that f is $\AC$ on $E_n$ for each $n$.
\end{definition}

Let $f\colon~\mathbb R\to \mathbb R$ be a function that is continuous on a closed
bounded set $E$. Let $\conv(A)$ denote a convex hull of a set $A\subset \mathbb{R}$. Define a function $g\colon~\conv(E)\to\mathbb R$ 
 associated with $f$ to be  the
continuous function for which $f(x)=g(x)$ at each point $x\in E$ and that is linear on each interval contiguous to $E$ in $\conv(E)$.

\begin{remark}
Notice that if a continuous interval map has $\VB$ on a set $E$ then it also has  $\VB$ on its closure $\overline{E}$.
Consequently, if a continuous map $f$ has VBG, then the covering $\{E_n\}$ in the definition  can consist of closed sets.  
\end{remark}

\begin{lemma}\label{l:1}Let $f\colon~I\to \mathbb R$ be a continuous function and let $E\subset I$
 be such that $f'_{ap}(x)$ exists and is finite for each $x\in E$. The following is true.
\begin{enumerate}[(i)]
\item\label{l:1:i} $f$ is $\ACG$ (has $\VBG$) on the set $E$.
\item\label{l:1:ii}  $f$ satisfies Lusin's condition on the set $E$, i.e. if $E_0\subset E$ and $\lambda(E_0)=0$ then $\lambda(f(E_0))=0$.
\item\label{l:1:iii} Let $\{E_n\}$ be a countable collection of closed sets covering $E$ and such that $f$ is $\VB$ on $E_n$ for each $n$  (in particular provided by \VBG). Then
the following holds:
\begin{enumerate}[({iii}${}_1$)]
    \item\label{l:1:iii-1} For each $n$, the continuous function $g_n\colon~\conv( E_n)\to\mathbb R$ associated with $f|_{E_n}$, i.e., $f(x)=g_n(x)$ for each $x\in E_n$, is of bounded variation and
        \item\label{l:1:iii-2} 
        $f'_{ap}(x)=g'_n(x)$ for almost every $x\in E_n$.
\end{enumerate}
\end{enumerate}
\end{lemma}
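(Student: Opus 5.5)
The plan is to derive each part from classical facts on approximate derivatives (Saks, \cite[Ch. 7, 9]{Sak37}), and to build (ii) and (iii) on the $\VBG$ decomposition from (i).

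\textbf{Part (i).} Fix $x\in E$ and a measurable set $A_x$ having $x$ as a point of density such that $|f(y)-f(x)|\le (|f'_{ap}(x)|+1)|y-x|$ for $y\in A_x$ near $x$. Set
\[
E_{k}=\{x\in E\colon \lambda\bigl([x-h,x+h]\setminus\{y\colon |f(y)-f(x)|\le k|y-x|\}\bigr)<h/3 \text{ for } 0<h<1/k\}.
\]
These sets cover $E$. Split each $E_k$ into pieces $E_{k,j}$ of diameter $<1/(2k)$. For $x<x'$ in $E_{k,j}$ with $h=x'-x$, the two density conditions (at $x$ with radius $h$ and at $x'$ with radius $h$) leave a common point $y\in[x,x']$ where both Lipschitz estimates hold. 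Hence $|f(x')-f(x)|\le 2k|x'-x|$, so $f$ is Lipschitz on $E_{k,j}$. Lipschitz on a set implies $\AC$ there, hence $\ACG$ and $\VBG$.

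\textbf{Part (ii).} If $E_0\subset E$ is null, then $f(E_0)\subset\bigcup_{k,j} f(E_0\cap E_{k,j})$. Since $f$ is $2k$-Lipschitz on $E_{k,j}$, each image has outer measure at most $2k\lambda(E_0\cap E_{k,j})=0$. So $f(E_0)$ is null.

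\textbf{Part (iii).} For (iii$_1$), let $E_n$ be closed with $V(f;E_n)<\infty$. The function $g_n$ agrees with $f$ on $E_n$ and is linear on each contiguous interval $(a,b)$. Its variation there equals $|f(b)-f(a)|$ with $a,b\in E_n$. Given a partition of $\conv(E_n)$, I refine it by inserting the endpoints of the contiguous intervals met. Each term of the refined sum is then a difference at points of $E_n$ or lies inside a contiguous interval. Hence the total variation of $g_n$ is at most $V(f;E_n)$, or at most $2V(f;E_n)$ with crude bookkeeping. Continuity of $g_n$ follows from that of $f$. For (iii$_2$), $g_n$ is $\VB$, so by Lebesgue's theorem $g_n'$ exists a.e. By the density theorem (Remark \ref{rem:fullD}), a.e. $x\in E_n$ is a point of density of $E_n$. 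At such an $x\in E$, take the set $A_x\cap E_n$, which still has density point $x$. On it $f=g_n$, so $f'_{ap}(x)=\lim_{y\to x,\,y\in A_x\cap E_n}\frac{g_n(y)-g_n(x)}{y-x}=g_n'(x)$ whenever $g_n'(x)$ exists.

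\textbf{Main obstacle.} The delicate step is (i), choosing a uniform density radius so that the two density conditions overlap. The constant $1/3$ makes the union of the two exceptional sets inside $[x,x']$ have measure $<2h/3<h$, which guarantees the common point $y$. A technical side issue is measurability of the $E_k$. I avoid it because only a countable cover is needed, not measurability. For (iii$_2$) I apply the density theorem to the Borel set $E_n$.
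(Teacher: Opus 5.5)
Your proof is correct, but it takes a different route from the paper. The paper proves nothing here itself: it cites \cite[Cor.~39]{To15} for (i), deduces (ii) from the general fact that $\AC$ on a set implies Lusin's condition (N) on that set, and cites \cite[Lemma~7]{To15} for (iii).

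You instead give a self-contained classical argument. The sets $E_k$ do cover $E$: the sets $\{y\colon |f(y)-f(x)|\le k|y-x|\}$ increase with $k$ while the range $0<h<1/k$ shrinks. The common-point trick then shows that $f$ is Lipschitz on each $E_{k,j}$, in fact with constant $k$, since $|f(x')-f(x)|\le k(y-x)+k(x'-y)$. This is stronger than $\ACG$/$\VBG$. It also turns (ii) into a one-line outer-measure estimate, with no need to pass through "$\AC$ implies (N)". Your refinement argument for (iii$_1$), which gives total variation of $g_n$ at most $V(f;E_n)$, and the Lebesgue/density-point argument for (iii$_2$) are likewise elementary. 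The paper's citations buy brevity; your version buys transparency and independence from \cite{To15}.

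One small point in (iii$_2$). You only treat density points $x$ of $E_n$ that lie in $E$, working with $A_x\cap E_n$. The statement, however, claims the identity for a.e.\ $x\in E_n$, and a closed cover $E_n$ need not be contained in $E$. The fix is to take $A=E_n$ itself. At every density point $x$ of $E_n$ where $g_n'(x)$ exists, $f=g_n$ on $E_n$ gives $\lim_{y\to x,\,y\in E_n}\frac{f(y)-f(x)}{y-x}=g_n'(x)$. Hence $f'_{ap}(x)$ exists and equals $g_n'(x)$ whether or not $x\in E$, and the identity holds at a.e.\ point of $E_n$.
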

\begin{proof}For the conclusion \ref{l:1:i}  see \cite[Cor. 39]{To15}. Conclusion \ref{l:1:ii} follows from the $\ACG$ property contained in \ref{l:1:i}. The property \ref{l:1:iii} has been established in \cite[Lemma 7]{To15}. In particular, the derivatives of $f$ and $g_n$ equal for $\lambda$-a.e. $x\in E_n$.
\end{proof}
A point $x_0\in (0,1)$ is a {\em point of dispersion of a measurable set $A$} if
$$
\lim_{h\to 0_{+}}\frac{\lambda(A\cap [x_0-h,x_0+h])}{2h}=0.
$$

\begin{definition}Let $f$ be measurable in a neighborhood of a point $x_0$. The {\em right
upper approximate limit of $f$ at $x_0$} is the greatest lower bound (the infimum) of the set
$$\{y: \{x>x_0\colon~f(x) > y\}\text{ has }x_0\text{ as a point of dispersion}\}$$ Now let $G(x,x_0)=\frac{f(x)-f(x_0)}{x-x_0}$, $x\neq x_0$. We define the {\em approximate
upper right derivative}, $D^+_{\ap}f(x_0)$, as the right upper approximate limit of $G(x, x_0)$ as $x\to x_0$. The other approximate extreme derivatives
$D^-_{\ap}f(x_0)$, $D_{+{\ap}}f(x_0)$, $D_{-{\ap}}f(x_0)$ are defined analogously.
\end{definition}

 A point $x$ for which $D^+_{\ap}f(x)=D^-_{\ap}f(x)=\infty$, $D_{+{\ap}}f(x)=D_{-{\ap}}f(x)=-\infty$ is called an {\em approximate knot point}.
The following version of the Denjoy-Young-Saks theorem employing approximate derivatives can be found in \cite[Theorem 7.15]{Je62}.

\begin{theorem}\label{t:1}If $f\colon~[a,b]\to \mathbb R$ is measurable, then with the possible exception of a Lebesgue measure null set, $[a,b]$ can be decomposed into two sets:
\begin{itemize}
\item[(i)] $A_1$, on which $f$ is approximately differentiable;
\item[(ii)] $A_2$ of approximate knot points.
\end{itemize}
\end{theorem}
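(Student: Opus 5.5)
This statement is the approximate-derivative version of the Denjoy--Young--Saks theorem, and it is cited from \cite[Theorem 7.15]{Je62}. I would not reprove it from scratch; I would only outline the classical proof and reduce it to known ingredients.

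Fix a measurable $f$ on $[a,b]$ and consider the four approximate extreme derivatives $D^{\pm}_{\ap}f$ and $D_{\pm\ap}f$. The \textbf{first step} is measurability. Approximate limits of measurable functions are measurable, which one checks using density sets as in Lemma \ref{l:3}. Hence the sets where the extreme derivatives satisfy various inequalities are measurable.

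The \textbf{second step} is the core Denjoy-type estimate, done for each pair of rationals $r<s$. Let $E_{r,s}$ be the set where $D_{+\ap}f(x)<r<s<D^{-}_{\ap}f(x)$; the sets with sides or inequalities swapped are handled in the same way. I would show $\lambda(E_{r,s})=0$. To do this, use Lusin's theorem to find a closed set $F$ of nearly full measure on which $f$ is continuous. Points of $E_{r,s}\cap D(F)$ then satisfy ordinary (non-approximate) Dini inequalities relative to $F$. A Vitali covering argument applied to $f(x)-rx$ and $f(x)-sx$ then gives contradictory measure estimates, exactly as in the classical Denjoy--Young--Saks proof.

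From the second step we obtain, for almost every $x$, two relations:
\[
D^{+}_{\ap}f(x)\ge D_{-\ap}f(x), \qquad D^{-}_{\ap}f(x)\ge D_{+\ap}f(x).
\]
The \textbf{third step} is a further covering argument that rules out, on a set of positive measure, an infinite upper approximate derivative on one side paired with a finite upper derivative on the other. Combining this with the second step yields the standard four-case alternative: almost everywhere either all four extreme derivatives are equal and finite, or the point is an approximate knot point.

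The \textbf{final step} is to define $A_1$ as the set where the four approximate extreme derivatives coincide and are finite. There $f'_{\ap}$ exists and is finite, since one can take the density set to be the intersection of the sets realizing the approximate limits. Define $A_2$ as the set of approximate knot points.

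I expect the main obstacle to be the Vitali covering step. The difficulty is transferring approximate Dini inequalities to honest estimates on a closed set where $f$ is controlled, since approximate derivatives discard sets of density zero. For this reason, in practice I would simply invoke \cite{Je62}, or Saks \cite[Ch.~9]{Sak37}.
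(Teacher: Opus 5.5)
Citing \cite[Theorem 7.15]{Je62} is exactly what the paper does, since it gives no proof of its own, so as a proof by citation your proposal is fine. The outline you attach, however, would fail if carried out.

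\textbf{The error is in Step~2.} The set $E_{r,s}=\{D_{+\ap}f<r<s<D^{-}_{\ap}f\}$ contains every approximate knot point, because there $D_{+\ap}f=-\infty$ and $D^-_{\ap}f=+\infty$. So $\lambda(E_{r,s})=0$ for all rational $r<s$ would make the approximate knot points a null set for \emph{every} measurable $f$, and alternative (ii) would be vacuous. That is false: approximate knot points can have full measure. By Jarn\'ik, this happens for a typical continuous function.

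Note also that $\lambda(E_{r,s})=0$ would give $D_{+\ap}f\ge D^-_{\ap}f$ a.e. This is the reverse of the inequality $D^-_{\ap}f\ge D_{+\ap}f$ you then record.

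The Vitali argument with $f-rx$ and $f-sx$ that you have in mind is Lebesgue's proof for \emph{monotone} functions. It works only because monotonicity bounds the total increment over the second family of covering intervals by the increment over the first family containing them. Lusin's theorem gives you only continuity of $f|_F$, which provides no such bound, so the expected contradiction never appears.

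\textbf{The missing idea is the Denjoy--Khintchine reduction to bounded variation.} Consider a set where the approximate extreme derivates are suitably bounded, say $D_{+\ap}f>-n$ and $D_{-\ap}f>-n$, with the exceptional sets $\{(f(y)-f(x))/(y-x)<-n\}$ of relative measure $<1/3$ at all scales $<1/k$. Compare the right-hand exceptional set of $x$ with the left-hand one of $x'>x$ on $(x,x')$. This shows that $f+n\,\id$ is nondecreasing on pieces of diameter $<1/k$. Hence $f$ is $\VBG$ there and agrees on countably many sets $E_j$ with functions $g_j$ of bounded variation. Lebesgue's theorem applied to $g_j$ (not to $f$), together with density points of $E_j$, then gives approximate differentiability a.e.; compare Lemma~\ref{l:1}(iii).

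Without this step your Steps~2--3 do not suffice, even after correcting Step~2 to the Young-type inequalities $D^+_{\ap}f\ge D_{-\ap}f$ and $D^-_{\ap}f\ge D_{+\ap}f$. Those inequalities are true, but they come from a non-Vitali argument; classically, from the countability of strict local extrema of $f-r\,\id$. Nothing in Steps~2--3 excludes a set of positive measure on which all four approximate extreme derivates are finite but unequal, e.g.\ $D^{\pm}_{\ap}f=1$ and $D_{\pm\ap}f=0$.

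Your Step~3 excludes the mixed cases of the classical four-case Denjoy--Young--Saks theorem. That is exactly where the approximate, two-case statement goes beyond the classical one, and your sketch gives no mechanism for it.

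Finally, a single density set along which the difference quotient converges must be built by a diagonal construction over scales. Intersecting the sets attached to each $\varepsilon$ does not work.
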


\begin{proposition}\cite[Corollary 4.3]{AMB}\label{p:1}~Let $J\subset \mathbb R$ be an interval and let $f\colon~J\to \mathbb R$ be continuous. 
\begin{enumerate}[(i)]
\item If $D^+f(x)\ge 0$ $\lambda$-a.e.\ and $D^+f(x)>-\infty$ everywhere, then $f$ is nondecreasing on $J$. 
\item Analogously, if $D_+f(x)\le 0$ $\lambda$-a.e.\ and $D_+f(x)<\infty$ everywhere, then $f$ is nonincreasing on $J$.
\end{enumerate}
\end{proposition}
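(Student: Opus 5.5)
We prove the two statements by the classical monotonicity argument: perturb $f$ by an increasing function that is infinitely steep on the exceptional null set, reduce to the case of a strictly positive upper right Dini derivative everywhere, and finally let the perturbation tend to zero. Part (ii) follows from part (i) applied to $-f$, because $D^+(-f)=-D_+f$. So we concentrate on (i).

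\emph{Step 1 (the strict case).} Let $h\colon J\to\mathbb R$ be continuous with $D^+h(x)>0$ for every $x\in J$. We claim that $h$ is nondecreasing. Suppose instead that $a<b$ in $J$ satisfy $h(a)>h(b)$. Pick $y$ with $h(b)<y<h(a)$ and put $x_0=\sup\{x\in[a,b]\colon h(x)\ge y\}$. By continuity $h(x_0)=y$, and $x_0<b$. Moreover $h(x)<y$ for all $x\in(x_0,b]$, so every right difference quotient at $x_0$ is negative. Hence $D^+h(x_0)\le 0$, a contradiction.

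\emph{Step 2 (the steep auxiliary function).} Let $E=\{x\in J\colon D^+f(x)<0\}$; by hypothesis $\lambda(E)=0$. For each $n$ choose an open set $G_n\supset E$ with $\lambda(G_n)<2^{-n}$, and define
\[
\varphi(x)=\sum_{n\ge1}\lambda\bigl(G_n\cap[c,x]\bigr),
\]
where $c$ is a fixed point of $J$, with the usual sign convention for $x<c$ (on a bounded subinterval this causes no trouble). Then $\varphi$ is continuous, nondecreasing and bounded by $1$. If $x\in E$ and $N\ge1$, then $x\in G_1\cap\dots\cap G_N$, which is open. So for all small $t>0$ we have $[x,x+t]\subset G_n$ for every $n\le N$, and therefore $(\varphi(x+t)-\varphi(x))/t\ge N$. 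Consequently $D_+\varphi(x)=+\infty$ on $E$, and $D_+\varphi\ge0$ everywhere.

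\emph{Step 3 (combining and passing to the limit).} For $\varepsilon>0$ set $h_\varepsilon=f+\varepsilon\varphi+\varepsilon x$. We use the inequality $D^+(u+v)\ge D^+u+D_+v$, which is valid whenever the right-hand side is not of the form $\infty-\infty$.
\begin{itemize}
\item Off $E$ we get $D^+h_\varepsilon\ge 0+0+\varepsilon>0$.
\item On $E$, the hypothesis $D^+f>-\infty$ everywhere is used exactly here: it rules out the indeterminate form and gives $D^+h_\varepsilon\ge D^+f+\infty=+\infty$.
\end{itemize}
By Step 1, each $h_\varepsilon$ is nondecreasing. For $a<b$ this gives $f(b)-f(a)\ge-\varepsilon\bigl(\varphi(b)-\varphi(a)+b-a\bigr)\ge-\varepsilon(1+b-a)$. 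Letting $\varepsilon\to0$ yields $f(b)\ge f(a)$.

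The main delicate point is Step 2, namely building $\varphi$ with infinite lower right derivative on the whole null set $E$, not just almost everywhere on it. The covering by nested open sets $G_n$ of rapidly shrinking measure handles this. If $J$ is unbounded, we apply the argument on each compact subinterval.
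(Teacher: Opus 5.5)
Your proof is correct. Step~1 is a sound last-crossing argument: continuity gives $h(x_0)=y$ and $x_0<b$, so every right difference quotient at $x_0$ is negative. The function $\varphi$ of Step~2 really has $D_+\varphi=+\infty$ at every point of $E$, because $G_1\cap\dots\cap G_N$ is an open neighbourhood of each such point. In Step~3 the hypothesis $D^+f>-\infty$ everywhere is used exactly where it is needed: it makes $D^+f(x)+D_+(\varepsilon\varphi+\varepsilon x)(x)$ a well-defined $+\infty$ on $E$. The reduction of (ii) to (i) via $D^+(-f)=-D_+f$ is also fine. Since $\sum_n\lambda(G_n)<1$, the function $\varphi$ is bounded on all of $J$, so unbounded $J$ needs no separate treatment.

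The paper gives no proof of its own. It imports the proposition from \cite[Corollary 4.3]{AMB} and uses it only as a black box in the proof of Theorem~\ref{t:2}, where the Besicovitch property supplies $D^+f>-\infty$ everywhere. Your argument is the classical one: perturb $f$ by a continuous increasing function that is infinitely steep on the exceptional null set, then apply an elementary monotonicity criterion. What it buys is a self-contained proof.

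It also shows there is slack in the hypotheses. Suppose $D^+f=-\infty$ were allowed on a countable set $C$. Step~1 would still work if you choose the level $y\in(h_\varepsilon(b),h_\varepsilon(a))$ outside the countable set $h_\varepsilon(C)$: the crossing point $x_0$ then lies off $C$, where $D^+h_\varepsilon>0$. This gives the more familiar version of the theorem with a countable exceptional set.
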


\section{Auxiliary results}

Recall that the set of all points of density of a measurable set $A\subset I$ is denoted by $D(A)$.

\begin{lemma}\label{l:2}Let $f\in C_{\lambda}$. For each Borel set $A\subset I$ 
\begin{equation*}
\lambda(\{x\in A\colon~x\in D(A)~\&~f(x)\in D(f(A))\})=\lambda(A).\end{equation*}
\end{lemma}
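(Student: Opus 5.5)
We must show that the set of $x\in A$ with $x\notin D(A)$ or $f(x)\notin D(f(A))$ is $\lambda$-null. It is the union of two sets,
\[
B_1=A\setminus D(A),\qquad B_2=\{x\in A\colon~f(x)\notin D(f(A))\},
\]
and I will show each has measure zero.

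The set $B_1$ is immediate. By Remark \ref{rem:fullD}, $\lambda(D(A))=\lambda(A)$, and by the Lebesgue density theorem $\lambda(A\setminus D(A))=0$. Measurability of $B_1$ follows from Lemma \ref{l:3}.

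The set $B_2$ is where the invariance $f\in C_\lambda$ enters. First I handle measurability. Since $A$ is Borel and $f$ is continuous, $f(A)$ is analytic and hence Lebesgue measurable, so Lemma \ref{l:3} applies and $D(f(A))$ is Borel. Therefore $N:=f(A)\setminus D(f(A))$ is measurable, and $\lambda(N)=0$ by Remark \ref{rem:fullD} together with the density theorem. Next, $B_2=A\cap f^{-1}(N)$. Since $f$ preserves $\lambda$, we get
\[
\lambda(f^{-1}(N))=\lambda(N)=0.
\]
Here $\lambda$ is read as completed Lebesgue measure. Invariance on Borel sets extends to null sets, because $N$ is contained in a Borel null set $N'$ and $f^{-1}(N)\subset f^{-1}(N')$. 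Hence $\lambda(B_2)=0$.

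Combining the two steps gives
\[
\lambda(\{x\in A\colon~x\in D(A)~\&~f(x)\in D(f(A))\})\ge\lambda(A)-\lambda(B_1)-\lambda(B_2)=\lambda(A).
\]
The reverse inequality is trivial, since the set is contained in $A$.

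The only real subtlety is the measurability of $f(A)$ and the use of invariance on a set that is merely Lebesgue measurable. Both are handled by analyticity of continuous images of Borel sets and completion of $\lambda$. If one prefers to avoid analytic sets, one can instead take $A$ up to a null set as a countable union of compacts $K_j$, where each $f(K_j)$ is compact, and argue on each $K_j$ separately.
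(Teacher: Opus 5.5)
Your proof is correct and follows the paper's argument. Both proofs set $N=f(A)\setminus D(f(A))$, use invariance to get $\lambda(f^{-1}(N))=0$, and discard the null set $A\setminus D(A)$. The only difference is how measurability is handled: the paper first reduces to closed $A$ by regularity, so $f(A)$ is compact, $N$ is Borel and $D(A)\subset A$. You instead treat a general Borel $A$ via analyticity of $f(A)$ and the completed measure, citing the density theorem directly; your compact-set alternative is exactly the paper's route.
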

\begin{proof}
By regularity of the Lebesgue measure it is sufficient to verify the case when $A$ is closed.
Let us denote $N:=f(A)\setminus D(f(A))$. By Lemma \ref{l:3} and Remark~\ref{rem:fullD} the set $N$ is Borel, $\lambda(N)=0$ and, since $f\in C_{\lambda}$, also $\lambda(f^{-1}(N))=0$. At the same time,
$$A\setminus f^{-1}(N)=\{x\in A\colon~f(x)\in D(f(A))\},
$$
so since $D(A)\subset A$ and $\lambda(A)=\lambda(D(A))$, we get $$\lambda(D(A)\cap (A\setminus f^{-1}(N)))=\lambda(\{x\in A\colon~x\in D(A)~\&~f(x)\in D(f(A))\})=\lambda(A).$$
The proof is completed.
\end{proof}

For any Besicovitch function  $f\colon~I\to\mathbb R$ 
a simple argument  shows that for every subinterval $J\subset I$,
\begin{equation}\label{e:33}
\lambda(A_{1}\cap J)>0
\end{equation}
where the set $A_1$ was introduced in Theorem \ref{t:1}(i).
We do not give a formal proof since we will show much more in the following theorem.
In what follows, for any $\alpha\in\mathbb R$ and interval $J\subset I$ we will use the notation
\begin{eqnarray*}
A_{1,\alpha^+}(J)&:=&\{x\in J\colon~f'_{ap}(x)\in (\alpha,\infty)\},\\
A_{1,\alpha^-}(J)
& :=&\{x\in J \colon~f'_{ap}(x)\in (-\infty,\alpha)\}.
\end{eqnarray*}

\begin{theorem}\label{t:2}Let $f\colon~I\to\mathbb R$ be a Besicovitch function. Then for every subinterval $J\subset I$ and every $\alpha\in\mathbb R$,
\begin{equation*}
\lambda(A_{1,\alpha^+}(J))>0 \qquad \text{ and }\qquad \lambda(A_{1,\alpha^-}(J))>0.
\end{equation*}
\end{theorem}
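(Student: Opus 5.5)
The plan is to argue by contradiction and to use the monotonicity criterion of Proposition~\ref{p:1}. Fix a subinterval $J\subset I$ and $\alpha\in\mathbb R$, and suppose $\lambda(A_{1,\alpha^+}(J))=0$. Put $h(x):=f(x)-\alpha x$ on $J$. Subtracting a linear function shifts every one-sided Dini derivative, ordinary or approximate, by exactly $-\alpha$. Hence $h$ is again Besicovitch on $J$: it has no finite or infinite unilateral derivative at any point. Likewise $h'_{ap}(x)=f'_{ap}(x)-\alpha$ wherever either side exists, and approximate knot points of $f$ and of $h$ coincide. The assumption therefore says that for $\lambda$-a.e.\ $x\in J$ at which $f$ is approximately differentiable, $h'_{ap}(x)\le 0$.

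Next I apply Theorem~\ref{t:1} to $h$ on $J$. Up to a null set, $J=A_1\cup A_2$, where $A_1$ consists of points of finite approximate derivative and $A_2$ of approximate knot points. I claim $D_+h(x)\le 0$ for a.e.\ $x\in J$. The key observation is the elementary inequality $D_+h(x)\le D_{+\ap}h(x)$. It holds because the ordinary lower right limit is the $\liminf$ over all $x\to x_0^+$, while the approximate one ignores sets of dispersion, so it can only be larger. At a.e.\ point of $A_1$ this gives $D_+h(x)\le h'_{ap}(x)\le 0$. At points of $A_2$ it gives $D_+h(x)\le D_{+\ap}h(x)=-\infty$.

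Then I verify the everywhere condition of Proposition~\ref{p:1}(ii), namely $D_+h(x)<\infty$ for every $x\in J$. If $D_+h(x)=+\infty$, then also $D^+h(x)=+\infty$, so $h$ has right derivative $+\infty$ at $x$. This contradicts the Besicovitch property, which excludes infinite unilateral derivatives as well. The right endpoint of $J$ needs no condition, and Proposition~\ref{p:1} is applied on the interval $J$ where right Dini derivatives make sense. Proposition~\ref{p:1}(ii) now yields that $h$ is nonincreasing on $J$. By Lebesgue's theorem a monotone function has a finite derivative a.e., so $h$, and hence $f$, has a finite two-sided derivative at some point of $J$. This contradicts $f$ being Besicovitch, so $\lambda(A_{1,\alpha^+}(J))>0$.

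The second inequality is symmetric. Assuming $\lambda(A_{1,\alpha^-}(J))=0$, I use $D^+h\ge D^+_{\ap}h$ to get $D^+h\ge 0$ a.e. The Besicovitch property excludes $D^+h(x)=-\infty$. Proposition~\ref{p:1}(i) then makes $h$ nondecreasing on $J$, and the same contradiction follows. The only delicate points are the comparison between ordinary and approximate extreme derivatives and the use of the "no infinite unilateral derivative" half of the Besicovitch definition to get the everywhere hypothesis. The latter is the step where the Besicovitch assumption does real work beyond mere nondifferentiability.
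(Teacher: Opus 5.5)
Your proof is correct and follows essentially the same route as the paper: argue by contradiction, subtract $\alpha x$, and combine Theorem~\ref{t:1} with the monotonicity criterion of Proposition~\ref{p:1}. The differences are minor. You subtract $\alpha x$ uniformly rather than splitting into the cases $\alpha\ge 0$ and $\alpha<0$. You also explicitly verify both hypotheses of Proposition~\ref{p:1}, using the comparison between ordinary and approximate extreme derivatives and the exclusion of infinite unilateral derivatives, whereas the paper leaves this implicit.
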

\begin{proof} Choose an arbitrary interval $J\subset I$ and assume that for some $\alpha\in\mathbb R$, $\lambda(A_{1,\alpha^-}(J))=0$.  
By Theorem~\ref{t:1} we know that 
$\lambda((A_{1}\cup A_2)\cap J)=\lambda(J)$ and
$f'_{ap}(x)\geq \alpha$ for every $x\in A_1\cap J$.
If $\alpha\ge 0$ then the assumptions of Proposition \ref{p:1}(i) are satisfied so $f$ is nondecreasing,  contradicting the fact that $f$ is Besicovitch. If $\alpha<0$, consider the Besicovitch function $k(x)=f(x)-\alpha x$.  The sets $A_1\cap J,A_2\cap J$ are the same for both functions $f,k$, but $k'_{ap}(x)\ge 0$ for each $x\in A_1\cap J$, so by Proposition \ref{p:1}(i) $k$ is nondecreasing, again leading to a contradiction. The case when $\lambda(A_{1,\alpha^{+}}(J))=0$ can be dealt with analogously. 
\end{proof}

\section{Besicovitch functions are dense in \texorpdfstring{$C_{\lambda}$}{}}

Let us denote 
\begin{equation*}
B_{\lambda} :=\{f\in C_{\lambda}\colon~f\text{ is Besicovitch}\}.
\end{equation*}
\begin{proposition}\label{p:2}
The set $B_{\lambda}$ is dense in $C_{\lambda}$.  
\end{proposition}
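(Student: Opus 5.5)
We use two facts: piecewise linear maps in $C_\lambda$ (or piecewise linear Markov maps preserving $\lambda$) are dense in $C_\lambda$, and there is at least one Besicovitch map $b\in B_\lambda$ (Bobok's construction \cite{Bo90}, \cite{Bo04}). Given $f\in C_\lambda$ and $\varepsilon>0$, the plan is to approximate $f$ within $\varepsilon/2$ by a piecewise affine map $g\in C_\lambda$. Then we modify $g$ on each of its finitely many linearity intervals by inserting a rescaled copy of $b$, so that the result is Besicovitch, stays in $C_\lambda$, and lies within $\varepsilon/2$ of $g$.

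The insertion uses the standard window-perturbation idea for $C_\lambda$. Choose the partition $0=a_0<\dots<a_n=1$ so that $g$ is affine on each $[a_{i-1},a_i]$ and the image of each piece has length less than $\varepsilon/2$; refining the partition only shrinks the images. On a piece $[a_{i-1},a_i]$ where $g$ is affine increasing onto $[c,d]$, replace $g$ by $x\mapsto c+(d-c)\,b\bigl((x-a_{i-1})/(a_i-a_{i-1})\bigr)$. On a decreasing piece, use $1-b$ in the same way; $1-b$ is still Besicovitch and $\lambda$-preserving.

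Two checks make this work.
\begin{itemize}
\item \emph{$\lambda$ is preserved.} The affine piece pushes normalized Lebesgue measure on $[a_{i-1},a_i]$ to normalized Lebesgue measure on $[c,d]$ with weight $a_i-a_{i-1}$. The replacement does the same, since $b$ preserves $\lambda$. Summing over pieces, the new map $h$ pushes $\lambda$ to the same measure as $g$ does, namely $\lambda$.
\item \emph{Continuity and closeness.} Continuity at the nodes requires $b(0),b(1)\in\{0,1\}$ with the right orientation: $b(0)=0, b(1)=1$ for increasing pieces, and $1-b$ for decreasing ones. Closeness holds because $h$ and $g$ take values in the same interval $[c,d]$ on each piece, so $\|h-g\|_\infty\le \max(d-c)<\varepsilon/2$.
\end{itemize}
Constant pieces of $g$ do not preserve $\lambda$, so in a piecewise affine $\lambda$-preserving approximant there are none. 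Any needed adjustment of the endpoints of $b$ is made by first composing or concatenating with a simple $\lambda$-preserving modification. For instance, use $b$ on the left half and its reflected copy on the right: the map $x\mapsto b(2x)/2$ on $[0,1/2]$ and $x\mapsto 1-b(2-2x)/2$ on $[1/2,1]$ preserves $\lambda$ and, if $b(0)=0$, fixes $0$ and $1$. We arrange such endpoint conditions first.

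It remains to check that $h$ is Besicovitch. On the interior of each piece, $h$ is an affine reparametrization of $b$ (or of $1-b$), so no one-sided derivative exists there. At a node $a_i$, the right-sided behaviour of $h$ is that of the right piece at its left endpoint, which is the rescaled $b$ at $0$. The left-sided behaviour is that of the rescaled $b$ at $1$. The one-sided derivatives of $b$ at $0$ (from the right) and at $1$ (from the left) do not exist, finite or infinite, because $b$ is Besicovitch on $[0,1]$. This is the key point: Besicovitch is a local, unilateral condition, preserved under affine changes of variable in domain and range with nonzero scaling. The same holds at $0$ and $1$.

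The main obstacle is the endpoint-value bookkeeping: we need a Besicovitch $b\in C_\lambda$ with $b(0)=0$ and $b(1)=1$. If Bobok's example does not satisfy this, we first produce such a map. One option is to use that any $b\in C_\lambda$ has a fixed point or a point where it hits $0$ or $1$, and cut and rescale there. A cleaner option is to glue the copies $x\mapsto b(2x)/2$ and $x\mapsto 1-b(2-2x)/2$ as above, after first arranging $b(0)\in\{0,1\}$ via the reflection $1-b$ or $b(1-x)$. I would try the gluing first.
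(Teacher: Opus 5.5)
\textbf{There is a genuine gap: the window map $b$ is never constructed.} Your scheme is the right one: approximate by a piecewise affine map in $C_\lambda$, then replace each affine piece by an affinely rescaled Besicovitch ``window'' that pushes normalized Lebesgue measure onto normalized Lebesgue measure on the image. Your checks of invariance, continuity, closeness and the Besicovitch property at the nodes are fine \emph{once} you have a map $b\in B_\lambda$ with $b(0)=0$ and $b(1)=1$. Neither of your proposed ways of producing such a $b$ works.

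\begin{itemize}
\item Bobok's example, as used in the paper, satisfies $f(0)=f(1)=0$ and $f(\tfrac12)=1$, so it is not already of this form.
\item Your gluing $x\mapsto b(2x)/2$ on $[0,\tfrac12]$, $x\mapsto 1-b(2-2x)/2$ on $[\tfrac12,1]$ takes the value $b(1)/2$ from the left and $1-b(1)/2$ from the right at $x=\tfrac12$. So it is continuous only if $b(1)=1$, i.e.\ only when you already have what you need. For Bobok's map it jumps from $0$ to $1$.
\item Cutting $b$ at an interior point and rescaling fails for a different reason. For $b\in C_\lambda$ and $[s,t]\subset I$, the image of $\lambda|_{[s,t]}$ under $b$ is in general not proportional to Lebesgue measure on $b([s,t])$. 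That proportionality is exactly what your invariance check uses.
\item Reflections and concatenations cannot help either. When $b(0)=b(1)$, every rescaled copy of $b$, $1-b$, $b(1-x)$, $1-b(1-x)$ takes equal values at the two ends of its domain. Any continuous chain of such copies therefore ends at the value where it started, and can never run from $c$ to $d$.
\end{itemize}

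\textbf{The missing idea is the symmetry of Bobok's function about $x=\tfrac12$.} Since $f(x)=f(1-x)$, the sets $f^{-1}(A)\cap[0,\tfrac12]$ and $f^{-1}(A)\cap[\tfrac12,1]$ are mirror images of each other. Hence each has measure $\tfrac12\lambda(A)$. It follows that $b(x):=f(x/2)$ satisfies $\lambda(b^{-1}(A))=\lambda(A)$, $b(0)=0$ and $b(1)=1$. Moreover $b$ is Besicovitch on $I$; at $x=1$ this holds because $f$ has no left derivative at $\tfrac12$.

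With this $b$, your argument becomes the paper's proof. The paper's $g_{a,b;c,d}$ is exactly your rescaled $b$. On decreasing pieces the paper uses the rescaled right half $h$, i.e.\ $b(1-x)$, where you use $1-b$; either choice works.
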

\begin{proof} In \cite[Section 3]{Bo90} there had been constructed a function $f\in B_{\lambda}$ such that 
$f(0)=f(1)=0$, $f(\frac{1}{2})=1$ and the graph of $f$ is symmetric with respect to the line $x=1/2$.

Let us denote $g :=f\vert_{  [0,\frac{1}{2}]}$ and $h :=f\vert_{[\frac{1}{2},1]}$. For two nondegenerate subintervals $[a,b]$ and $[c,d]$ let us define rescaled versions of $g$ and $h$ as  
\begin{align*}&
g_{a,b;c,d}(x) :=c+(d-c)g\left (\frac{x-a}{2(b-a)}\right ),~x\in [a,b],\\    
&h_{a,b;c,d}(x) :=c+(d-c)h\left (\frac{1}{2}+\frac{x-a}{2(b-a)}\right ),~x\in [a,b].    
\end{align*}
Note that for any Borel set $A\subset [c,d]$
we have
$$
\lambda(g_{a,b;c,d}^{-1}(A))=\lambda(h_{a,b;c,d}^{-1}(A))=
\frac{b-a}{c-d}\lambda(A)=\lambda(L^{-1}(A))
$$
where $L$ is an affine function on $[a,b]$ with range $[c,d]$.

By \cite[Proposition 8]{BT} the set $PA_{\lambda}$ of piecewise affine functions is dense in $C_{\lambda}$. Fix $k\in C_{\lambda}$, $\eps>0$ and $\ell\in PA_{\lambda}$ for which $\|  k-\ell \|_\infty<\frac{\eps}{2}$. We can consider a finite set 
$$P_{\eps} :=\{0=p_0<p_1<\cdots<p_{n-1}<p_n=1\},~\max_{1\le i\le n}\vert p_i-p_{i-1}\vert<\frac{\eps}{2}$$
and $P_{\eps}$ containing all critical values of $\ell$, i.e., the values at all local extrema. 
 
We enumerate pieces of monotonicity of $\ell$ associated to $P_\eps$, so that
$$\ell^{-1}([p_{i-1},p_i])=\bigcup_j[a(i,j),b(i,j)],~i\in\{1,\dots,n\},$$
and use them to define the function $m\in C_{\lambda}$ as 
\begin{equation*}\label{e:1}
m(x) :=     \begin{cases}
              g_{a(i,j),b(i,j);p_{i-1},p_i}(x),&\text{if } x\in [a(i,j),b(i,j)],~\ell(a(i,j))<\ell(b(i,j)),\\              h_{a(i,j),b(i,j);p_{i-1},p_i}(x),&\text{if } x\in [a(i,j),b(i,j)], ~\ell(a(i,j))>\ell(b(i,j)).
           \end{cases}
\end{equation*}
It follows that $\| k-m\|_\infty <\eps$ and since $f$ was from $C_{\lambda}$, the function $m\in C_{\lambda}$.
It is clear the map $m$ is Besicovitch as well, so $m \in B_\lambda$.
\end{proof}

In  \cite{BCOT1} we showed if $f\in C(I)$ preserves a non-atomic probability measure $\mu$ with $\supp~\mu=I$ (denoted $f\in C_{\mu})$ then 
 \begin{itemize}
    \item There exists an increasing $h$ of $I$ such that $h\circ f\circ h^{-1}\in C_{\lambda}$, and
 \item $f$ has a dense set of periodic points, i.e., $\overline{\per(f)}=I$.
 \end{itemize}
So we see that the set $CP$ of all maps with dense set of periodic points can be expressed as $CP=\bigcup_{\mu}C_{\mu}$.

\begin{corollary}\label{cor:BesicovitchDenseCP}The set of Besicovitch functions is dense in $CP$ and thus dense in $\CP$.
\end{corollary}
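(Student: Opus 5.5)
Given $F\in CP$ and $\eps>0$, we will produce a Besicovitch function $\eps$-close to $F$ in the sup norm, using the description $CP=\bigcup_\mu C_\mu$ recalled above. First, there is a non-atomic probability measure $\mu$ with $\supp\mu=I$ and $F\in C_\mu$. By the first bullet above there is an increasing homeomorphism $h$ of $I$ with $G:=h\circ F\circ h^{-1}\in C_\lambda$. In fact $h$ is the distribution function $h(x)=\mu([0,x])$, which is a homeomorphism because $\mu$ is non-atomic with full support. Since $h^{-1}$ is uniformly continuous, we choose $\delta>0$ such that $|u-v|<\delta$ implies $|h^{-1}(u)-h^{-1}(v)|<\eps$. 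By Proposition~\ref{p:2} we then pick $m\in B_\lambda$ with $\|m-G\|_\infty<\delta$. We set $\tilde F:=h^{-1}\circ m\circ h$. This map preserves $\mu$, so $\tilde F\in C_\mu\subset CP$, and
\[
|\tilde F(x)-F(x)|=|h^{-1}(m(h(x)))-h^{-1}(G(h(x)))|<\eps
\]
for every $x$.

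The main obstacle is that conjugating by an arbitrary homeomorphism $h$ need not preserve the Besicovitch property: a singular $h$ can create or destroy one-sided derivatives. So the map $\tilde F$ itself is only a candidate. To avoid this, I would not approximate $F$ through $h$ alone. Instead I would imitate the construction in the proof of Proposition~\ref{p:2} directly for $\tilde F$, applied to $F$ in the original coordinates:
\begin{enumerate}[(a)]
\item Approximate $F$ by a piecewise monotone map in $C_\mu$. Conjugating a piecewise affine $\ell\in PA_\lambda$ close to $G$ gives $h^{-1}\circ\ell\circ h$, which is piecewise monotone, continuous and $\eps$-close to $F$.
\item Choose a fine partition $P_\eps$ in the range, containing all critical values.
\item On each lap, replace the map by a suitably rescaled copy of $g$ or $h$ (the halves of the Bobok function), so that the measure-preservation condition is still met for the pushed measure.
\end{enumerate}

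Alternatively, since density in $CP$ only asks for Besicovitch functions nearby, not for functions in $C_\mu$, the cleaner route is to skip $\mu$ entirely. We approximate $F$ by a continuous piecewise monotone map $\ell$ with finitely many laps. On each lap $[a,b]$, mapping monotonically onto $[c,d]$, we insert the rescaled Bobok piece $g_{a,b;c,d}$ or $h_{a,b;c,d}$ after subdividing the range finely. These pieces are affine rescalings of a Besicovitch function, so they remain Besicovitch. They match at lap endpoints, so the glued map is continuous, and at the junction points the one-sided derivatives are those of the rescaled pieces, which do not exist. The sup distance is controlled by the mesh of the range partition. The resulting function is thus Besicovitch and $\eps$-close to $F$, giving density in $CP$.

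Density in $\CP$ follows at once, since $CP$ is dense in its closure and density is transitive. The only step that needs care is ensuring that the approximants are genuinely Besicovitch at the gluing points. This holds because the relevant one-sided derivative is determined by a single affinely rescaled copy of the Besicovitch function $f$.
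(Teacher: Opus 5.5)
\textbf{There is a genuine gap.} Your first paragraph is correct and matches the paper's starting point, and you rightly diagnose that a singular $h$ may destroy the Besicovitch property. But your final ``cleaner route'' then drops the requirement that the approximant lie in $CP$. It uses nothing about $F$, and only shows that every continuous map is a uniform limit of Besicovitch functions in $C(I)$.

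The corollary asserts more: Besicovitch maps \emph{belonging to} $CP$ are dense in $CP$. This is the sense of Proposition~\ref{p:2}, where $B_\lambda\subset C_\lambda$, and of the later question whether the Besicovitch functions in $CP$ form a meager set. Nothing in your gluing of Bobok pieces into the laps of an arbitrary piecewise monotone $\ell$ forces the result to preserve a non-atomic, fully supported measure. Your sketch (a)--(c) does not repair this either. An affine rescaling of the Bobok halves on a lap of $h^{-1}\circ\ell\circ h$ does not push $\mu$ to the correct image measure, because that lap transports $\mu$ through the possibly singular $h$. Inserting $h^{-1}\circ(\cdot)\circ h$ around the rescaled pieces instead just reproduces $h^{-1}\circ m\circ h$ and the obstacle you identified.

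\textbf{The missing idea} is that you may change the invariant measure rather than discard it. Since $CP=\bigcup_\nu C_\nu$, the approximant only has to preserve \emph{some} non-atomic, fully supported $\nu$. The paper replaces $h$ by a piecewise linear homeomorphism $p$ with $\|h-p\|_\infty+\|h^{-1}-p^{-1}\|_\infty<\delta$. It then takes $q:=p^{-1}\circ m\circ p$, where $m\in B_\lambda$ and $\|m-G\|_\infty<\delta$. Three things then hold.
\begin{enumerate}[(i)]
\item $q$ preserves $\nu(A):=\lambda(p(A))$, which is non-atomic with full support, so $q\in C_\nu\subset CP$.
\item $q$ is Besicovitch. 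Near each point, $p$ and $p^{-1}$ are affine with positive slope on each side. Hence the one-sided difference quotients of $q$ at $x$ are obtained from those of $m$ at $p(x)$ by composing with increasing homeomorphisms of $[-\infty,\infty]$. This holds even when $m(p(x))$ is a break point of $p^{-1}$.
\item Writing $\omega$ for moduli of continuity, $|q(x)-F(x)|\le \|p^{-1}-h^{-1}\|_\infty+\omega_{h^{-1}}\bigl(\|m-G\|_\infty+\omega_G(\|p-h\|_\infty)\bigr)\le\delta+\omega_{h^{-1}}(\delta+\omega_G(\delta))$. So $\|q-F\|_\infty<\eps$ once $\delta$ is small.
\end{enumerate}
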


\begin{proof} Fix a function $f\in CP$. By the above assertions there exists a measure $\mu$, a homeomorphism  $h\in C(I)$ and $g\in C_{\lambda}$ such that 
\begin{equation*}f\in C_{\mu} \quad \text{ and } \quad  g=h\circ f\circ h^{-1}\in C_{\lambda}.
\end{equation*}
Fix $\eps>0$. For a sufficiently small $\delta >0 $ choose a Besicovitch function $\tilde g \in C_\lambda$ such that 
$\| \tilde g - g\|_\infty < \delta$. 
Let  $p$ be a piecewise linear homeomorphism of $I$ such that 
$$\| h-p\|_\infty+\| h^{-1}-p^{-1}\|_\infty<\delta.$$ 
Let  $q :=p^{-1}\circ \tilde g\circ p$, clearly 
$q \in C_{\nu} \subset CP$ for some $\nu$ with $\supp\, \nu=I$. 

The maps  $\tilde g,h,h^{-1},p,p^{-1}$ and $q$ are all  uniformly 
continuous, thus we can choose $\delta>0$ sufficiently small so that
$$\|f-q\|_{\infty}<\eps.
$$
Since $p$ is piecewise linear, $q$ is a Besicovitch function.  
\end{proof}

\section{No Besicovitch function in  \texorpdfstring{$C_{\lambda}$}{} is invertible  \texorpdfstring{$\lambda$}{}-a.e.}

For the notion of invertibility $\lambda$-a.e.\ we use the following definition. It corresponds to the classical definition in ergodic theory described in \cite[Def.1.1(c) and p. 59]{Wal82}.  

\begin{definition}\label{d:1}Let $f\in C_{\lambda}$. We say that $f$ is invertible $\lambda$-a.e.\ if there is a Borel set $X\subset I$ such that $\lambda(X)=1$ and for each Borel $Y\subset I$,
\begin{equation*}
\lambda(f(Y\cap X))=\lambda(Y).
\end{equation*}
\end{definition}

\begin{theorem}\label{t:3}Let $f\in C_{\lambda}$ be a Besicovitch function. Then for every interval $I_0\subset I$ there is a Borel set $Y\subset I_0$ such that for each Borel set $X\subset I$ satisfying $\lambda(X)=1$,
\begin{equation}\label{e:11}
\lambda(f(Y\cap X))>\lambda(Y).
\end{equation}
In particular, $f$ is not invertible $\lambda$-a.e.
\end{theorem}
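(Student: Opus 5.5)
The plan is to find, inside $I_0$, a Borel set of positive measure on which $f$ is injective and uniformly expanding by a factor strictly larger than $1$. Removing a null set from such a set cannot destroy the expansion, and this gives \eqref{e:11}. Measure preservation of $f$ will not really be used; only the Besicovitch property, through Theorem~\ref{t:2}, will be.

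\textbf{Step 1 (positive measure of steep approximate derivative).} Apply Theorem~\ref{t:2} with $J=I_0$ and $\alpha=1$. This gives $\lambda(A_{1,1^+}(I_0))>0$. Since $A_{1,1^+}(I_0)=\bigcup_m\{x\in I_0\colon f'_{ap}(x)>1+\tfrac1m\}$, we can fix $\delta>0$ such that
$$A:=\{x\in I_0\colon f'_{ap}(x)>1+2\delta\}$$
has positive measure.

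\textbf{Step 2 (uniform two-point estimate on a piece of $A$).} Fix a small $\varepsilon>0$. For $k\in\mathbb N$ let $A_k$ be the set of $x\in A$ such that, for every $0<h<1/k$, the set
$$G_x:=\{y\colon |f(y)-f(x)-f'_{ap}(x)(y-x)|\le\varepsilon|y-x|\}$$
occupies more than $3/4$ of the measure of each of $[x-h,x]$ and $[x,x+h]$. By the definition of the approximate derivative, $A=\bigcup_k A_k$. Using the Borel structure, in the spirit of Lemma~\ref{l:3}, we may pass to Borel subsets, so some $A_k$ has positive measure. We further restrict to $x$ with $f'_{ap}(x)\in(1+2\delta,M)$ for some finite $M$, and to a subinterval of length $<1/(2k)$ meeting this set in positive measure. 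Call the resulting Borel set $Y$.

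For $x<x'$ in $Y$, the sets $G_x$ and $G_{x'}$ both have density larger than $3/4$ in a suitable interval of length comparable to $x'-x$. Hence they share a point $y$ with $|y-x|,|y-x'|\le C|x'-x|$. Comparing $f(x')-f(y)$ and $f(y)-f(x)$ with the linear approximations at $x'$ and $x$, we want to derive
$$f(x')-f(x)\ge(1+\delta)(x'-x)\qquad (x<x',\ x,x'\in Y).$$
This is the main obstacle. The two different slopes $f'_{ap}(x)$ and $f'_{ap}(x')$ must be reconciled, and I expect to handle this by also restricting $Y$ to a set where $f'_{ap}$ varies by less than $\varepsilon$ (a countable partition of $(1+2\delta,M)$). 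The point $y$ must be chosen to the right of both $x$ and $x'$ (or to the left of both), so that the error terms are controlled by $C\varepsilon(x'-x)$ with an absolute constant $C$. Choosing $\varepsilon$ small relative to $\delta$ then gives the inequality.

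\textbf{Step 3 (conclusion).} By Step 2, $f|_Y$ is injective and its inverse $f(Y)\to Y$ is Lipschitz with constant $(1+\delta)^{-1}$. Let $X$ be Borel with $\lambda(X)=1$. The set $f(Y\cap X)$ is a continuous image of a Borel set, hence analytic and therefore Lebesgue measurable. Since a Lipschitz map with constant $L$ multiplies outer measure by at most $L$,
$$\lambda(Y)=\lambda(Y\cap X)\le(1+\delta)^{-1}\lambda(f(Y\cap X)).$$
Because $\lambda(Y)>0$, this gives $\lambda(f(Y\cap X))>\lambda(Y)$, which is \eqref{e:11}.

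Finally, suppose $f$ were invertible $\lambda$-a.e. Take the set $X$ from Definition~\ref{d:1} and the set $Y$ constructed above. Definition~\ref{d:1} gives $\lambda(f(Y\cap X))=\lambda(Y)$, contradicting \eqref{e:11}.
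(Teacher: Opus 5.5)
Your proof is correct, and it takes a different route from the paper's. The paper works through the VBG structure of Lemma~\ref{l:1}. It picks a closed piece $E_n$ of $A_{1,\alpha^+}(I_0)$ together with its associated BV function $g_n$. It then chooses a point $z$ that is a density point of $E_n$ and whose image $f(z)$ is a density point of $f(A_{m,z})$; this is Lemma~\ref{l:2}, the only place where $f\in C_\lambda$ is used. Next it compares one small window $K$ around $f(z)$ with its preimage window $L$, which gives $\lambda(f(Y))>\lambda(L)\ge\lambda(Y)$ for the single set $Y=L\cap A_{m,z}$. Finally it needs Lusin's condition (N) from Lemma~\ref{l:1}(ii) to pass from $Y$ to $Y\cap X$. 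You instead run the standard argument that an approximately differentiable function is Lipschitz on countably many pieces, and your slope banding turns it into a lower bound. The obstacle you flag resolves exactly as you propose. For $x<x'$ in $Y$ and $d=x'-x<1/(2k)$, the sets $G_x$ and $G_{x'}$ fill more than $\frac12 d$ and more than $\frac34 d$ of $[x',x'+d]$ respectively, so they share a point $y$ there. Then
\[
f(x')-f(x)\ge f'_{ap}(x)\,d-\bigl|f'_{ap}(x)-f'_{ap}(x')\bigr|(y-x')-\varepsilon(y-x)-\varepsilon(y-x')\ge\bigl(f'_{ap}(x)-4\varepsilon\bigr)d\ge(1+\delta)d
\]
once $\varepsilon\le\delta/4$. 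Your route buys a stronger and cleaner statement: $f|_Y$ is injective and $\lambda(f(Z))\ge(1+\delta)\lambda(Z)$ for every Borel $Z\subset Y$. Discarding the null set $Y\setminus X$ therefore costs nothing. You need neither measure preservation, nor Lemma~\ref{l:2}, nor the functions $g_n$, nor condition (N), so the first assertion holds for every Besicovitch $f\colon I\to\mathbb R$. The price is that you must justify measurability yourself. First, $f'_{ap}$ must be measurable; this is classical and the paper tacitly uses it too. Second, the sets $A_k$ must be measurable, which is easiest if you replace ``more than $3/4$'' by ``at least $3/4$''. By continuity in $h$ the condition then only needs checking for rational $h$, and the count $\frac12 d+\frac34 d>d$ is unaffected. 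This should replace your vaguer appeal to passing to Borel subsets ``in the spirit of Lemma~\ref{l:3}''.
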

\begin{proof}By Theorem \ref{t:2} for any fixed $\alpha>1$ we have
$$\lambda(A_{1,\alpha^+}( I_0))>0.$$
 By Lemma \ref{l:1} the map $f$  has VBG on $ E :=A_{1,\alpha^+}( I_0)$, in particular 
there is a countable collection of sets $\{E_n\}$ covering $E$ such that $f$  has VB on each $E_n$. Since the collection is countable, there is at least one $E_n$ with $\lambda(E_n)>0$.  If the set $E_n$ had a nonempty interior, the function $f$ would have finite derivative at almost every interior point of $E_n$ which is impossible. So $E_n$ has an empty interior and w.l.o.g.\ we may assume that $\lambda(J\cap E_n)>0$ for each interval $J\subset I$ such that $\inter(J)\cap E_n\neq\emptyset$.

For each $m\in\mathbb N$ let us denote $\mathbb J_m$ the set of all closed subintervals of $I$ of the length less than or equal to $\frac{1}{m}$. Let
$$C_m :=\{x\in E_n\colon~x\in D(E_n\cap\inter(J))~\&~f(x)\in D(f(E_n\cap \inter(J))\text{ for some }J\in\mathbb J_m\}.
$$
It is a consequence of Lemma \ref{l:2} that $\lambda(C_m)=\lambda(E_n)$ for each $m$. For $m\in\mathbb N$  and $x\in E_n$ let us denote 
$$A_{m,x} :=E_n\cap \bigg[x-\frac{1}{m},x+\frac{1}{m}\bigg]$$ and put
$$B_m :=\{x\in E_n\colon~x\in D(E_n)~\&~f(x)\in D(f(A_{m,x}))\}.
$$
Clearly for each $m\in\mathbb N$, $C_m\subset B_m$ and also $\lambda(B_m)=\lambda(E_n)$. If we put $$B :=\bigcap_{m\ge 1}B_m =E_n\setminus \bigcup_{m\ge 1}(E_n\setminus B_m),$$ we get again $\lambda(B)=\lambda(E_n)>0$. 

Recall that by Lemma \ref{l:1}\ref{l:1:iii-2},  $f'_{ap}(x)=g'_n(x)$ for almost every $x\in B$. 
Fix such a point
$z\in B$ and note that by \ref{l:1:iii-1} we have:

\begin{enumerate}[(i)]
\item\label{t:3:aux-i} $f\vert A_{m,z}=g_n\vert A_{m,z}$ for each $m$,
\item\label{t:3:aux-ii} $z\in D(E_n)$ and 
$f(z)=g_n(z)\in D(g_n(A_{m,z}))$ for each $m$,
\item\label{t:3:aux-iii} $f'_{ap}(z)=g_n'(z) = \beta>\alpha>1$.
\end{enumerate}

By \ref{t:3:aux-iii} we can consider $m$ sufficiently large to ensure
\begin{align}\label{a:1}&g_n(x)> \frac{\alpha+\beta}{2}
(x-z)+g_n(z),~\text{if }x\in (z,z+\frac{1}{m}]\\
&\label{a:2}g_n(x)<
\frac{\alpha+\beta}{2}
(x-z)+g_n(z),~\text{if }x\in [z-\frac{1}{m},z).
\end{align}
By \ref{t:3:aux-ii} we have that $f(z)\in D(g_n(A_{m,z}))$, so by (\ref{a:1}),(\ref{a:2}) 
for sufficiently small $\gamma>0$ if we consider $K=[f(z)-\gamma,f(z)+\gamma]$ then 
\begin{equation}\label{e:7}
\lambda(K\cap g_n(A_{m,z}))>\frac{1}{\alpha}\lambda(K)=\frac{2\gamma}{\alpha}>2\gamma\frac{2}{\alpha+\beta}\ge\lambda(L),
\end{equation}
where $L=\{x\in [z-\frac{1}{m},z+\frac{1}{m}]\colon~g_n(x)\in K\}$, since by \ref{t:3:aux-iii} and the formulas \eqref{a:1},\eqref{a:2} and since $L\subset [z-\frac{2\gamma}{\alpha+\beta},z+\frac{2\gamma}{\alpha+\beta}]$.  Notice that the set $L$ is closed with a nonempty interior. By \ref{t:3:aux-iii} we have
\begin{equation}\label{e:8}f(L\cap A_{m,z})=g_n(L\cap A_{m,z})=K\cap g_n(A_{m,z}).
\end{equation}

Since the interior of $L\cap A_{m,z}$ is empty, we clearly have $\lambda(L)>\lambda(L\cap A_{m,z})$, so from (\ref{e:7}) and (\ref{e:8}) we obtain
\begin{equation}\label{e:9}
\lambda(f(L\cap A_{m,z}))=\lambda(K\cap g_n(A_{m,z}))>\lambda(L\cap A_{m,z}).
\end{equation}
 Put $Y=L\cap A_{m,z}\subset E_n\subset I_0$ and note that $Y$ is closed.
In order to show (\ref{e:11}),
fix any Borel set $X\subset I$ such that $\lambda(X)=1$. By Lemma \ref{l:1}(ii) the function $f$ satisfies Lusin's condition on the set $Y$, so by (\ref{e:9})
$$
\lambda(f(Y\cap X))=\lambda(f(Y))>\lambda(Y).
$$
In particular, $f$ is not $\lambda$-a.e.\ invertible due to Definition \ref{d:1}.
\end{proof}

\begin{corollary}\label{c1}If $f\in C_{\lambda}$ is a Besicovitch function then $h_{\lambda}(f)>0$.
\end{corollary}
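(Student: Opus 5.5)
The plan is to derive positivity of the entropy from Theorem \ref{t:3} via the standard fact that a measure-preserving transformation with zero entropy is invertible mod $0$. I will use this in Rokhlin's form: for a Lebesgue space $(I,\mathcal B,\lambda)$ and a measure-preserving $f$, $h_\lambda(f)=0$ implies $f^{-1}\mathcal B=\mathcal B$ mod $\lambda$. The argument is by contradiction, so I assume $h_\lambda(f)=0$.

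\emph{Step 1: a zero-entropy map is exact-invertible.} Take the Pinsker $\sigma$-algebra $\Pi(f)$. When $h_\lambda(f)=0$ we have $\Pi(f)=\mathcal B$ mod $0$. The Pinsker algebra satisfies $f^{-1}\Pi(f)=\Pi(f)$ (see e.g.\ Walters or Rokhlin's theory of Lebesgue spaces). Hence $f^{-1}\mathcal B=\mathcal B$ mod $\lambda$. Equivalently, I can argue directly: take a countable generating partition $\xi$ of points, use $h(f)\ge H(\xi\mid f^{-1}\xi)$ for the point partition, and conclude that the conditional measures on the fibres $f^{-1}(y)$ are a.e.\ Dirac.

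\emph{Step 2: from $f^{-1}\mathcal B=\mathcal B$ mod $0$ to Definition \ref{d:1}.} Since $f$ is continuous on a standard Borel space, the identity $f^{-1}\mathcal B=\mathcal B$ mod $\lambda$ yields a measurable map $g$ with $g\circ f=\id$ $\lambda$-a.e. I then take a Borel set $X$ of full measure on which $f$ is injective and $g\circ f=\id$. Such an $X$ can be chosen with $\lambda(f(X))=1$, and I may also shrink it so that $f^{-1}(f(X))\cap X=X$. For a Borel $Y$, the set $f(Y\cap X)$ is Borel, because it is the injective image of a Borel set. Then
\[
\lambda(f(Y\cap X))=\lambda\bigl(f^{-1}(f(Y\cap X))\bigr)=\lambda(Y\cap X)=\lambda(Y).
\]
The first equality is measure preservation. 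The second holds because $f^{-1}(f(Y\cap X))$ differs from $Y\cap X$ only by a subset of $I\setminus X$, which is null. So $f$ is invertible $\lambda$-a.e.

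\emph{Step 3: conclude.} Step 2 contradicts Theorem \ref{t:3}, which produces a $Y$ with $\lambda(f(Y\cap X))>\lambda(Y)$ for every full-measure Borel $X$. Therefore $h_\lambda(f)>0$.

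The main obstacle is Step 2, specifically the measure-theoretic bookkeeping: finding a full-measure $X$ on which $f$ is injective and saturated mod $0$. I would handle it by choosing $X$ inside the full-measure set $\{x : f^{-1}(f(x))\cap \{g\circ f=\id\}=\{x\}\}$. I would then pass to a Borel subset using Lusin--Souslin, which says that injective Borel images are Borel. Step 1 itself I would simply cite as the classical Rokhlin theorem.
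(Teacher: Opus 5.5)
Your proposal is correct and follows the paper's route. The paper simply cites the classical fact that $h_\lambda(f)=0$ forces $f$ to be invertible $\lambda$-a.e.\ (\cite[Cor.~4.14.3]{Wal82}) and then invokes Theorem~\ref{t:3}, and your Step 2 (injectivity of $f$ on a full-measure Borel set plus Lusin--Souslin, giving Definition~\ref{d:1}) correctly fills in bookkeeping the paper leaves implicit; note that the extra condition $f^{-1}(f(X))\cap X=X$ always holds, so it is not needed.
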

\begin{proof}It is a known fact that if $h_{\lambda}(f)=0$ then $f$ is invertible $\lambda$-a.e.\ \cite[Cor. 4.14.3]{Wal82}.
\end{proof}

Combining Corollary \ref{c1} with Theorem 1 of \cite{BCOT} immediately yields a new proof of the following fact, see  \cite[Section 3]{Bo90}.

\begin{corollary}\label{c2}The set of all Besicovitch function in $C_{\lambda}$ is a meager set.
\end{corollary}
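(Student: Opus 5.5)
The plan is to combine Corollary~\ref{c1} with the genericity result of \cite{BCOT}. By Theorem~1 of \cite{BCOT}, the maps $f\in C_{\lambda}$ with $h_{\lambda}(f)=0$ form a residual subset $\mathcal Z\subset C_{\lambda}$; that is, $C_\lambda\setminus\mathcal Z$ is contained in a countable union of closed nowhere dense subsets of $C_\lambda$. Here $C_\lambda$ carries the uniform metric, in which it is a closed subspace of $C(I)$ and hence complete, so Baire category arguments apply in it.

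By Corollary~\ref{c1}, every $f\in B_{\lambda}$ satisfies $h_\lambda(f)>0$. Hence $B_{\lambda}\subset C_{\lambda}\setminus\mathcal Z$. The complement of a residual set is meager, and any subset of a meager set is meager, so $B_{\lambda}$ is meager in $C_{\lambda}$, which is the claim.

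The only point to check is that the topology in \cite{BCOT} is the same uniform topology on $C_{\lambda}$ that is used here, for instance in Proposition~\ref{p:2}. I expect this to be the main (though mild) obstacle, since the argument is otherwise immediate. Meagerness here is relative to $C_\lambda$, which is the intended statement (cf.\ \cite[Section 3]{Bo90}). Since $B_\lambda$ is also dense by Proposition~\ref{p:2}, the result says that $B_\lambda$ is dense but meager, with no contradiction.
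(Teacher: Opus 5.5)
Your proposal is correct and is essentially the paper's own argument: by Corollary~\ref{c1}, $B_\lambda$ lies in the complement of the residual set of zero-entropy maps given by Theorem~1 of \cite{BCOT}, and that complement is meager. The completeness of $C_\lambda$ is true but not needed, since the complement of a residual set is meager by definition.
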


\section{Open problems}
In Corollary~\ref{cor:BesicovitchDenseCP} we showed that the set of Besicovitch functions is dense in $CP$.

\begin{question} Is the set of all Besicovitch functions in CP meager? 
\end{question}

It is possible that $\CP\setminus CP$ contains no Besicovitch functions. By Theorem 4.5 from \cite{BCOT2} $\CP\setminus CP$ is also dense in $\CP$. 

\begin{question}
Does there exist a Besicovitch function in $\CP\setminus CP$?
\end{question}

In \cite[Theorem 1]{CO} the authors proved that the generic map in $C_{\lambda}$
has the {\em iterated crookedness property}, i.e., for each $\delta > 0$ there exists an iterate which is $\delta$-crooked. This motivates the following question.

\begin{question}\label{q:crooked}
Does every Besicovitch $f \in C_\lambda$ have the iterated crookedness property?
\end{question}

In $C(I)$, the collection of maps with the iterated crookedness property forms a nowhere dense set \cite{BKU} and since Besicovitch functions are dense in $C(I)$\footnote{Sketch of the proof: In \cite{Pep28} the author constructed a Besicovitch function $f\in C(I)$ satisfying $f(0)=f(1)=0$ and $f\ge 0$ on $I$. 
Define two maps $f^{(i)} :I \to I$ such that 
$f^{(1)}|_{[0,1/2]}$ and $f^{(2)}|_{[1/2,1]}$ are rescalings of $f$ and $f^{(i)} \equiv 0$ on the complementary interval.
If $g$ is affine and increasing on $I$ then the function $h:= g+\alpha f^{(1)}-\beta f^{(2)}$ is in $C(I)$ for sufficiently small $\alpha, \beta \in \mathbb{R}$ and  is Besicovitch. 
Combining this with the fact that piecewise affine functions with nonzero slopes are dense in $C(I)$ yields the assertion.} there exist Besicovitch functions without iterated crookedness property, so Question \ref{q:crooked} stated for $f \in C(I)$ has a negative answer.

Maps with iterated crookedness property have topological entropy either $0$ or $\infty$ \cite{Mouron}, thus an example of a Besicovitch map with finite topological entropy would give a negative answer to Question~\ref{q:crooked}. This motivates us to ask the following question.

\begin{question} Does every Besicovitch function $f\in C_{\lambda}$ have infinite topological entropy?
\end{question}

Recent advances on the geometric analysis of nowhere differentiable functions further motivate the study of 
Besicovitch functions. In particular, the article of Bara\'nski, B\'ar\'any and Romanowska \cite{BBR} 
establishes sharp results for the Hausdorff dimension of the graph of the Weierstrass function, showing how fractal dimension reflects fine-scale oscillatory behavior beyond differentiability. It would be similarly natural to investigate the Hausdorff or lower-box dimension of the graphs of Besicovitch functions which lack both finite and infinite derivatives at every point. Determining whether such functions exhibit universal dimensional properties or a broad range of possible graph dimensions would provide an interesting geometric refinement of our knowledge about such functions.

\begin{question} Do graphs of Besicovitch functions in $C_{\lambda}$  all have the same Hausdorff (lower-box) dimension?
\end{question}

Note that generic $C_{\lambda}$ function has Hausdorff dimension and lower box dimension equal to $1$ whereas the upper box dimension equal to $2$ \cite{SW}. On the other hand, 
the measure theoretic entropy of the generic map from $C_{\lambda}$ is $0$ \cite{BCOT}.
Combined with the discussion above, we thus have the following natural question.

\begin{question} If the Hausdorff dimension of a graph of $f \in C_{\lambda}$ is greater than $1$, does this imply positive measure theoretic entropy?
\end{question}

\section*{Acknowledgments}
J. \v Cin\v c acknowledges the support of 
Slovenian research agency ARIS grant J1-4632 and ARIS project under Contract No. SN-ZRD/22-27/0552.

P. Oprocha was partially supported by project
No.~CZ.02.01.01/00/23\_021/0008759 supported by EU funds, through the Operational Programme Johannes Amos Comenius.

\begin{table}[ht]
\begin{tabular}[t]{p{2.5cm}  p{11cm} }
\includegraphics [width=2.1cm]{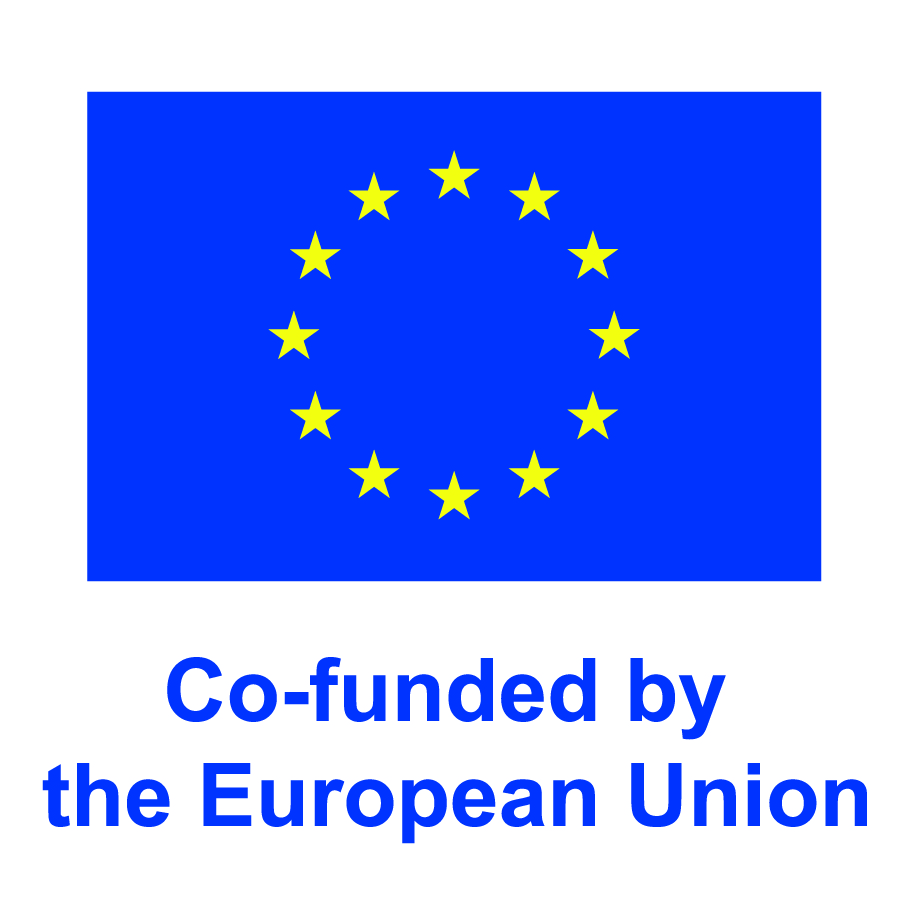} &
\vspace{-2cm}
This research is part of J.\ \v Cin\v c's  project that has received funding from the European Union's Horizon Europe research and innovation programme under the Marie Sk\l odowska-Curie grant agreement No.\ HE-MSCA-PF-PFSAIL-101063512.\\
\end{tabular}
\end{table}

\end{document}